\theoremstyle{plain}
\newtheorem{thm}{Theorem}
\newtheorem{lem}{Lemma}
\newtheorem{cor}{Corollary}
\theoremstyle{definition}
\newtheorem{dfn}{Definition}
\newtheorem{ex}{Example}
\theoremstyle{remark}
\newtheorem*{rem}{Remark}
\newtheorem*{ackn}{Acknowledgment}
\title[A characterization of differential operators]{A characterization of differential operators in the ring of complex polynomials}
\author{Włodzimierz Fechner and Eszter Gselmann}
\keywords{differential operators, operator equations, Leibniz rule, ring of complex polynomials, derivations}
\subjclass{Primary: 47B92; Secondary: 13B25, 39B22, 47J05, 47A62, 47B38}
\begin{document}

\begin{abstract}
The paper aims to provide a full characterization of all operators $T\colon \mathscr{P}(\mathbb{C}) \to \mathscr{P}(\mathbb{C})$ acting on the space of all complex polynomials that satisfy the Leibniz rule
\[
T(f\cdot g)= T(f)\cdot g+f\cdot T(g)
\]
for all $f, g\in \mathscr{P}(\mathbb{C})$. We do not assume the linearity of $T$.
As we will see, contrary to the well-known theorems for function spaces there are many other solutions here, not only differential operators.
From our main result, we also derive two corollaries, showing that in some special cases operators that satisfy the Leibniz rule have some particular form.
\end{abstract}

\maketitle

\section{Introduction}

Characterizations of basic operators, such as the first-order differential operator or the Fourier transform by one of its fundamental properties and without assuming linearity, are one of the most interesting problems in modern analysis. Let us briefly recall some most important from our point of view results in this direction.

For instance, if $\mathscr{S}(\mathbb{R}^{N})$ denotes the Schwartz space of `rapidly' decreasing functions $f\colon \mathbb{R}^{n}\to \mathbb{C}$, then in Artstein-Avidan--Faifman--Milman \cite{ArtFaiMil12}, the authors showed that any \emph{bijective} transformation $T\colon \mathscr{S}(\mathbb{R}^{n})\to \mathscr{S}(\mathbb{R}^{n})$ that also fulfills
\[
T(f\cdot g)= T(f)\ast T(g)
\qquad
\left(f, g\in \mathscr{S}(\mathbb{R}^{N})\right)
\]
is just a slight modification of the Fourier transform.

In addition, we can mention a characterization theorem of higher order differential operators due to Peetre \cite{Pee59}, see also \cite{BraEllRob86, Eng67}.
Let $n$ be a positive integer and let $\mathscr{C}^{\infty}_{0}(\mathbb{R}^{n})$ denote the set of infinitely many times differentiable functions defined on $\mathbb{R}^{n}$ that vanish at infinity, further let $\mathscr{C}_{b}(\mathbb{R}^{n})$ stand for the set of all those continuous functions defined on $\mathbb{R}^{n}$ that are bounded. If the \emph{linear} operator $T\colon \mathscr{C}^{\infty}_{0}(\mathbb{R}^{n})\to \mathscr{C}_{b}(\mathbb{R}^{n})$ fulfills
\[
\mathrm{supp}(Tf)\subset \mathrm{supp}(f)
\]
for all $f\in \mathscr{C}^{\infty}_{0}(\mathbb{R}^{n})$, then there exists a positive integer $N$
and for all $\alpha\in \mathbb{N}^{n}$, $|\alpha|\leq N$, there exists continuous functions $c_{\alpha}$ on $\mathbb{R}^{N}$ such that
\[
(Tf)(x)= \sum_{|\alpha|\leq N}c_{\alpha}(x)D^{\alpha}f(x)
\]
for all $f\in \mathscr{C}^{\infty}_{0}(\mathbb{R}^{n})$ and $x\in \mathbb{R}^{n}$, where
\[
D^{\alpha}f(x)= \frac{\partial^{|\alpha|}}{\partial x_{1}^{\alpha_{1}}\cdots \partial x_{n}^{\alpha_{n}}}f(x_{1}, \ldots, x_{n}),
\]
and $x=(x_{1}, \ldots, x_{n})\in \mathbb{R}^{n}$.

The main objective of our paper is to prove similar characterization theorems in polynomial rings. More precisely, we will study here those operators $T\colon \mathscr{P}(\mathbb{K})\to \mathscr{P}(\mathbb{K})$ that fulfill the Leibniz rule, i.e.
\[
T(f\cdot g)= f T(g)+T(f)g
\qquad
\left(f, g\in \mathscr{P}(\mathbb{K})\right).
\]
We would like to specifically emphasize that in this work we do not impose any additional conditions (such as linearity or bijectivity) on the operator $T$.

As we will see, unlike the well-known theorems for function spaces (see Theorem \ref{thmKM}), there are many other solutions here, not only differential operators. Moreover, if $D^{\alpha}$ is a differential operator acting on $\mathbb{R}$ or on $\mathbb{C}$, then
\[
\mathrm{deg}\left(D^{\alpha} p\right) \leq \mathrm{deg}(p)
\]
holds for any polynomial $p$.
Thus after presenting our main result, we focus on determining the solutions of the Leibniz rule that also have this property, see Corollaries \ref{cor1} and \ref{cor2}.

\subsection{Leibniz rule in $\mathscr{C}^{k}(I)$}

Let $I\subset \mathbb{R}$ be an open set, $k$ be a nonnegative integer
let
\[
\mathscr{C}^{k}(I)=
\left\{ f\colon I\to \mathbb{R}\, \vert \, \text{$f$ is $k$-times continuously differentiable on $I$}\right\}.
\]
We denote the space of continuous functions also by $\mathscr{C}(I)$, instead of writing $\mathscr{C}^{0}(I)$, further we put
\[
\mathscr{C}^{\infty}(I)= \bigcap_{k} \mathscr{C}^{k}(I).
\]

On spaces $\mathscr{C}^{k}(I)$ the solutions of the Leibniz rule were determined by K\"onig and Milman
\cite{KonMil18}. Let us quote their result.

\begin{thm}[Leibniz rule on $\mathscr{C}^{k}(I)$]\label{thmKM}
Let $I\subset \mathbb{R}$ be an open set and $k$ be a nonnegative integer. Suppose that the operator $T\colon \mathscr{C}^{k}(I)\to \mathscr{C}(I)$ satisfies the Leibniz rule, i.e.,
\begin{equation}\label{L}
T(f\cdot g)= T(f)\cdot g+f\cdot T(g)
\end{equation}
holds for all $f, g\in \mathscr{C}^{k}(I)$. Then there exists continuous functions $c, d\in \mathscr{C}(I)$ such that
\[
T(f)=
\begin{cases}
cf\ln(|f|) +df', & \text{ if } k\geq 1\\
cf\ln(|f|) , & \text{ if } k=0.
\end{cases}
\qquad
\left(f\in \mathscr{C}^{k}(I)\right).
\]
Conversely, any map $T$ defined by the above formula fulfils the Leibniz rule on $\mathscr{C}^{k}(I)$.
\end{thm}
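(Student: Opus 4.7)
\medskip

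\noindent\textbf{Proof plan.}

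My first step would be to extract all the ``free'' information by plugging degenerate inputs into the Leibniz rule. Setting $f=g=1$ gives $T(1)=2T(1)$, hence $T(1)=0$; setting $f=g=0$ gives $T(0)=0$; and applying the rule to $1=(-1)\cdot(-1)$ together with $T(1)=0$ forces $T(-1)=0$. The structural observation driving everything else is that on the multiplicative submonoid of functions not vanishing at a fixed point $x\in I$, the functional
\[
\phi_x(f):=\frac{T(f)(x)}{f(x)}
\]
satisfies the additive Cauchy-type identity $\phi_x(fg)=\phi_x(f)+\phi_x(g)$. Thus the multiplicative Leibniz equation collapses to a logarithmic Cauchy equation along the ``fibre'' at each point $x$.

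The second step is a locality principle: if $f,g\in \mathscr{C}^{k}(I)$ agree on some neighborhood $U$ of $x_0$, then $T(f)(x_0)=T(g)(x_0)$. To prove this I would pick a cut-off $\varphi\in \mathscr{C}^{k}(I)$ with $\varphi(x_0)=1$ and $\mathrm{supp}(\varphi)\subset U$, observe $\varphi f=\varphi g$, expand both sides by Leibniz at $x_0$, and use $f(x_0)=g(x_0)$, $\varphi(x_0)=1$ to cancel everything except $T(f)(x_0)=T(g)(x_0)$. Localisation in hand, the determination of $T(f)(x_0)$ can be reduced to $T$'s action on canonical local models of $f$, namely constants and the linear germ $x-x_0$.

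The third step is the explicit computation. On positive constants the Cauchy equation for $\phi_x$ together with continuity of $a\mapsto T(\text{const } a)\in \mathscr{C}(I)$ gives $\phi_x(a)=c(x)\ln a$ for a continuous $c\in \mathscr{C}(I)$; signs are absorbed via $T(-1)=0$, yielding $T(\text{const } a)(x)=c(x)\,a\ln|a|$. For $k\ge 1$, Leibniz applied iteratively to $(x-x_0)^n=(x-x_0)\cdot(x-x_0)^{n-1}$ forces $T((x-x_0)^n)(x_0)=0$ whenever $n\ge 2$, so the only genuinely new parameter is $d(x_0):=T(x-x_0)(x_0)$. A generic $f$ with $f(x_0)\ne 0$ factors locally as $f=f(x_0)\cdot\bigl(1+(x-x_0)h\bigr)$ with $h\in \mathscr{C}^{k-1}$; invoking additivity of $\phi_{x_0}$ and the preceding computations then produces
\[
T(f)(x_0)=c(x_0)\,f(x_0)\ln|f(x_0)|+d(x_0)\,f'(x_0).
\]
Functions with $f(x_0)=0$ are handled by factoring $f=(x-x_0)^{m}u$ with $u(x_0)\ne 0$ and applying Leibniz; continuity of $c$ and $d$ is then propagated from the assumed continuity of the range by testing $T$ against suitable fixed functions.

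The main obstacle I foresee lies in the $k=0$ case, where no derivative is available to carry a second parameter. One must rule out any extra contribution to $T(f)(x_0)$ beyond $c(x_0)f(x_0)\ln|f(x_0)|$. The previous factorisation trick is not available in $\mathscr{C}(I)$, so instead one has to exploit locality together with the fact that in the continuous category the germ of $f-f(x_0)$ at $x_0$ admits no natural linear invariant: by approximating $f$ near $x_0$ through products of nonvanishing continuous functions and using additivity of $\phi_{x_0}$, any would-be ``$d$-term'' must be identically zero. Making this last step rigorous — essentially showing that a Leibniz operator on $\mathscr{C}(I)$ annihilates every function whose value at $x_0$ is $1$ — is the technical core of the $k=0$ part of the theorem.
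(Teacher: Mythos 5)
Note first that the paper does not prove this theorem: it is quoted from K\"onig and Milman \cite{KonMil18}, and the text only sketches the strategy (localization of $T$ to a pointwise representing function $F(x,f(x),\dots,f^{(k)}(x))$, followed by a structural analysis of $F$ that exploits the continuity of the functions in the range of $T$). Your plan follows the same two-step outline, and your locality lemma via cut-off functions, the degenerate substitutions, and the reduction of $T(f)(x_0)$ to constants plus the germ $x-x_0$ for $k\ge 1$ are all sound in outline. But there is a genuine gap at the analytic heart of the argument: you invoke ``continuity of $a\mapsto T(\text{const }a)$'' to solve the Cauchy-type equation on constants. That continuity is not a hypothesis. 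The theorem only assumes that each individual output $T(f)$ is continuous in the variable $x\in I$; it says nothing about continuity of $T$ as an operator. Without it, the equation $B(ab)=aB(b)+bB(a)$ on constants admits the pathological solutions $B(a)=a\,A(\ln|a|)$ with $A$ an arbitrary discontinuous additive function, and each such $B(a)$ is a constant, hence continuous, function on $I$ — so nothing in your plan excludes them. Ruling them out is precisely the hard part of the K\"onig--Milman proof: one must play the constants off against non-constant test functions (whose values sweep continuously as $x$ varies) so that continuity in $x$ of the outputs transfers to continuity of the representing function in its function arguments. Your plan never performs this transfer.

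The second gap you flag yourself: the $k=0$ case, i.e.\ showing that a Leibniz operator on $\mathscr{C}(I)$ annihilates every $f$ with $f(x_0)=1$, is left as ``the technical core'' to be made rigorous, so that half of the theorem is not proved. Be aware that the identity $\phi_{x_0}(f)=n\,\phi_{x_0}(f^{1/n})$ for positive $f$ does not by itself force $\phi_{x_0}(f)=0$ without some boundedness or continuity input, so the same regularity issue from the first paragraph resurfaces here and cannot be sidestepped by purely algebraic manipulation of germs.
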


The proof of this theorem consists of two steps. At first, one has to show that the operator $T$ is `localized on intervals'. In other words, the operator $T$ is defined pointwise in that sense that there is a function $F\colon I\times \mathbb{R}^{n+1}\to \mathbb{R}$ such that for all $f\in \mathscr{C}^{k}(I)$ and $x\in I$ we have
\[
T(f(x))= F(x, f(x), \ldots, f^{(k)}(x)).
\]
At this point, no regularity of $F$ is known. Observe that then the Leibniz rule is equivalent to a functional equation for the representing function $F$. The second step is to analyze the structure of $F$ and to prove the continuity of the coefficient functions occurring there, by using the fact that the image space of the operator $T$ is $\mathscr{C}(I)$.

\subsection{Derivations}

The Leibniz rule, that is, equation \eqref{L} plays a key role not only in operator theory but also in algebra. Here we recall some basic facts from the monograph Kuczma \cite{Kuc09}.

\begin{dfn}
Let $(Q;+,\cdot)$ be a commutative ring, and let $(P;+,\cdot)$ be a subring
of $(Q;+,\cdot)$. A function $f:P\to Q$ is called a \emph{derivation} if it satisfies both the equations
\[
f(x+y)=f(x)+f(y),
\]
and also
\[
f(xy)=xf(y)+yf(x)
\]
for all $x,y\in P$.

If $f$ satisfies the second equation, then we will call it the Leibniz mapping.
\end{dfn}

\begin{ex}\label{Ex14.1.1}
Let $(F;+,\cdot)$ be a field, and let $P=Q=F\left[x\right]$ be the ring
of polynomials with coefficients from $F$. Let the function
$f:F\left[x\right]\to F\left[x\right]$ be defined as
$f(p)=p'$, where $p'$ is the derivative of $p$.
We have clearly
\[
f(p+q)=(p+q)'=p'+q'=f(p)+f(q)\,,
\]
\[
f(pq)=(pq)'=pq'+qp'=pf(q)+qf(p)\,.
\]
Consequently, $f$ is a derivation.
\end{ex}
\begin{ex}
Let $(F;+,\cdot)$ be a field, and suppose that we are given a derivation
$f:F\to F$. We define a function
$f_0:F\left[x\right]\to F\left[x\right]$ as follows. If $p\in F\left[x\right]$,
$p(x)=\sum\limits^{n}_{k=0}a_k x^{k}$, consider the polynomial $f_0(p)$ defined by
(in sequel denoted by $p^{f}$)
\[
f_0(p)=p^{f}(x)=\sum\limits^{n}_{k=0}f(a_k)x^{k}.
\]
Then $f_{0}\colon F[x]\to F[x]$ is a derivation.
\end{ex}
The derivations described in the above two examples have a fundamental importance
for we have the following lemma, which is a modification of Kuczma \cite[Lemma 14.2.2]{Kuc09}.

\begin{lem}
Let $(K;+,\cdot)$ be a field, and let $(F;+,\cdot)$ be a subfield of
$(K;+,\cdot)$, and let $f:F\to K$ be a derivation. Then we have, for every
$a\in F$ and every polynomial $p\in F\left[x\right]$,
\[
f\big(p(a)\big)=p^{f}(a)+f(a)p'(a).
\]
\end{lem}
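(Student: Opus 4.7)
The plan is to reduce the statement, via additivity and the Leibniz rule, to a single monomial computation, and then to handle monomials by induction on the degree.

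First I would establish the preliminary identity $f(a^k) = k\,a^{k-1} f(a)$ for every $a \in F$ and every nonnegative integer $k$. The case $k=0$ amounts to $f(1)=0$, which falls out of the Leibniz rule applied to $1 = 1\cdot 1$: we get $f(1) = 1\cdot f(1) + 1\cdot f(1) = 2f(1)$, hence $f(1)=0$. (Here, and throughout, the computation takes place inside $K$, which contains $F$.) The case $k=1$ is a tautology. For the inductive step, assuming the claim for some $k\geq 1$, the Leibniz rule yields
\[
f(a^{k+1}) = f(a\cdot a^{k}) = f(a)\cdot a^{k} + a\cdot f(a^{k}) = f(a)a^{k} + a\cdot k a^{k-1} f(a) = (k+1)a^{k} f(a),
\]
which closes the induction.

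Next I would write an arbitrary polynomial $p\in F[x]$ as $p(x)=\sum_{k=0}^{n} a_{k} x^{k}$ with $a_{k}\in F$, so that $p(a)=\sum_{k=0}^{n} a_{k} a^{k}$ in $F$. Applying $f$ and using that $f$ is additive,
\[
f\big(p(a)\big) = \sum_{k=0}^{n} f\big(a_{k}\, a^{k}\big).
\]
Each summand is a product of two elements of $F$, so the Leibniz rule gives $f(a_{k}a^{k}) = f(a_{k})\,a^{k} + a_{k}\,f(a^{k})$. Substituting the preliminary identity into the second term,
\[
f\big(p(a)\big) = \sum_{k=0}^{n} f(a_{k})\,a^{k} \; + \; f(a)\sum_{k=0}^{n} k\, a_{k}\, a^{k-1}.
\]
The first sum is precisely $p^{f}(a)$ by the definition of $p^{f}$, and the second sum is $p'(a)$ by the definition of the formal derivative, yielding the claimed formula.

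There is no real obstacle: the argument is a direct unwinding of the two defining properties of a derivation, with the only mildly subtle point being to notice that $f(1)=0$ (so that the constant term of $p$ contributes only through $p^{f}$, not through $p'$). The main thing to be careful about is to keep track of the fact that $f$ maps into the larger field $K$, so that expressions like $f(a)\cdot a^{k}$ should be read in $K$; since $F\subseteq K$ is a subfield, all arithmetic used above is valid.
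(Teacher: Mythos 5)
Your proof is correct. The paper itself gives no argument for this lemma --- it is quoted as a modification of Lemma 14.2.2 in Kuczma's monograph --- and your derivation (reduce to monomials by additivity, prove $f(1)=0$ and $f(a^k)=ka^{k-1}f(a)$ by induction, then apply the Leibniz rule termwise) is exactly the standard proof one finds there; the bookkeeping about the constant term and about values landing in $K$ rather than $F$ is handled correctly.
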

%
%

The question of whether there exists a nontrivial, i.e. nonzero derivation on fields with characteristic zero is far from being obvious but can be answered affirmatively. The following theorem (see Kuczma \cite[Lemma 14.2.4]{Kuc09}) plays a key role in addressing this problem.

\begin{thm}
Let $(K;+,\cdot)$ be a field of characteristic zero, let $(F;+,\cdot)$
be a subfield of $(K;+,\cdot)$, let $S$ be an algebraic base of $K$ over $F$,
if it exists, and let $S=\varnothing$ otherwise.
Let $f:F\to K$ be a derivation.
Then, for every function $u:S\to K$,
there exists a unique derivation $g:K\to K$
such that $g \mid F=f$ and $g \mid S=u$.
\end{thm}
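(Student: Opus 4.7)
The plan is to build the extension in two stages: first extend $f$ across the purely transcendental part $F(S)/F$, then across the algebraic part $K/F(S)$, and finally invoke Zorn's lemma to handle the (possibly infinite) algebraic step. Uniqueness will be read off along the way, because at each stage the defining formulas leave no freedom once $u$ is fixed.

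\smallskip

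\textbf{Step 1 (transcendental stage).} Working inside the polynomial ring $F[S]$, I would define $g$ on monomials by the Leibniz rule and then extend additively, declaring $g(s) := u(s)$ for $s \in S$ and $g \mid F := f$. A direct monomial-by-monomial check shows that this prescription is forced by additivity and Leibniz's rule, and that it yields a well-defined derivation $F[S] \to K$; this is essentially a multi-variable version of Example~\ref{Ex14.1.1} combined with the second example. I would then push the derivation to the quotient field $F(S)$ via the classical quotient rule $g(p/q) = (g(p)q - p\,g(q))/q^{2}$, verifying that this value is independent of the representation $p/q$ and again satisfies additivity and Leibniz. Uniqueness on $F(S)$ is immediate, since any derivation extending $f$ and sending $s \mapsto u(s)$ must obey these same formulas.

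\smallskip

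\textbf{Step 2 (algebraic stage).} Because $S$ is an algebraic base, $K$ is algebraic over $F(S)$. For an element $\alpha \in K$ with minimal polynomial $p \in F(S)[x]$, the lemma preceding the theorem forces
\[
0 = g(p(\alpha)) = p^{g}(\alpha) + g(\alpha)\,p'(\alpha),
\]
so $g(\alpha)$ is determined by $g(\alpha) = -p^{g}(\alpha)/p'(\alpha)$. The crucial point is that $p'(\alpha) \neq 0$: since $\mathrm{char}(K)=0$, the irreducible polynomial $p$ is separable, so $\gcd(p,p') = 1$ and $p'(\alpha) \neq 0$. This forced value simultaneously gives uniqueness and tells us how to define the extension one element at a time.

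\smallskip

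\textbf{Step 3 (Zorn's lemma and verification).} To promote this pointwise recipe to a derivation defined on all of $K$, I would consider the poset of pairs $(L, g_{L})$, where $L$ is an intermediate field $F(S) \subseteq L \subseteq K$ and $g_{L}$ is a derivation $L \to K$ extending the derivation constructed in Step~1. Chains have obvious upper bounds (take the union), so Zorn produces a maximal such pair $(L^{*}, g^{*})$. If $L^{*} \neq K$, pick $\alpha \in K \setminus L^{*}$ and adjoin it: using the formula $g(\alpha) := -p^{g^{*}}(\alpha)/p'(\alpha)$ (where $p$ is the minimal polynomial of $\alpha$ over $L^{*}$), one checks that $g^{*}$ extends to a derivation on $L^{*}(\alpha)$, contradicting maximality. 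Hence $L^{*} = K$, and the resulting $g$ is the desired derivation.

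\smallskip

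\textbf{Main obstacle.} The delicate point is the well-definedness of the algebraic extension: one must verify that the value $-p^{g^{*}}(\alpha)/p'(\alpha)$ does not depend on artifacts of the presentation and that additivity plus Leibniz really hold on the enlarged field $L^{*}(\alpha)$, not only on $L^{*}[\alpha]$. This amounts to showing that the assignment on $L^{*}[x]$ given by $h(x) \mapsto h^{g^{*}}(\alpha) + h'(\alpha) \cdot g(\alpha)$ vanishes on the ideal $(p)$, which in turn relies precisely on separability, i.e., on $p'(\alpha) \neq 0$. Everything else is bookkeeping with the Leibniz rule, the quotient rule, and Zorn's lemma.
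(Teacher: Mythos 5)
Your argument is correct and follows the standard extension-of-derivations proof: the transcendental stage over $F(S)$ via the Leibniz and quotient rules (well-defined because $S$ is algebraically independent over $F$), the algebraic stage via the forced value $g(\alpha) = -p^{g}(\alpha)/p'(\alpha)$ with $p'(\alpha)\neq 0$ guaranteed by separability in characteristic zero, and Zorn's lemma to exhaust $K$. The paper itself gives no proof of this statement --- it quotes it from Kuczma's monograph --- and your argument coincides with the classical one found there, including the correct identification of the one delicate point (well-definedness of the extension on $L^{*}(\alpha)$, i.e.\ vanishing on the ideal $(p)$).
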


Let now $\mathbb{K}\in \left\{\mathbb{R}, \mathbb{C}\right\}$. Since $\mathrm{algcl}(\mathbb{Q})\neq \mathbb{K}$, there exists an algebraic base of $\mathbb{K}$ over $\mathbb{Q}$. Therefore we have the following.

\begin{thm}
Let $\mathbb{K}\in \left\{\mathbb{R}, \mathbb{C}\right\}$. There exists a non-identically zero derivation $d\colon \mathbb{K}\to \mathbb{K}$.
\end{thm}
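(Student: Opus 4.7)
The plan is to apply the preceding extension theorem with $F=\mathbb{Q}$ and $K=\mathbb{K}$, starting from the trivial derivation on $\mathbb{Q}$ and extending it to a non-trivial derivation on $\mathbb{K}$.

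First I would observe that the zero map $f\colon\mathbb{Q}\to\mathbb{K}$ is a derivation; indeed, any derivation on a prime field of characteristic zero vanishes, since $f(1)=f(1\cdot 1)=2f(1)$ forces $f(1)=0$, then additivity gives $f=0$ on $\mathbb{Z}$, and the quotient rule (derived from the Leibniz rule) extends this to $\mathbb{Q}$. Next, since $\mathrm{algcl}(\mathbb{Q})$ is countable while $\mathbb{K}\in\{\mathbb{R},\mathbb{C}\}$ is uncountable, the hypothesis $\mathrm{algcl}(\mathbb{Q})\neq\mathbb{K}$ of the previous remark is satisfied, and hence an algebraic (i.e.\ transcendence) base $S$ of $\mathbb{K}$ over $\mathbb{Q}$ exists. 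Moreover $S\neq\varnothing$: if $S$ were empty then $\mathbb{K}$ would be algebraic over $\mathbb{Q}$, contradicting uncountability.

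Now I would fix any element $s_{0}\in S$ and define $u\colon S\to\mathbb{K}$ by $u(s_{0})=1$ and $u(s)=0$ for every $s\in S\setminus\{s_{0}\}$. Applying the previous theorem to the zero derivation $f\colon\mathbb{Q}\to\mathbb{K}$ and this choice of $u$, we obtain a derivation $d\colon\mathbb{K}\to\mathbb{K}$ with $d\mid_{\mathbb{Q}}=0$ and $d\mid_{S}=u$. In particular $d(s_{0})=1\neq 0$, so $d$ is not identically zero, which is exactly what we need.

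There is no real obstacle here beyond invoking the previous theorem; the only points that require a word of justification are the cardinality remark ensuring the existence of a nonempty transcendence basis, and the brief verification that the prime field $\mathbb{Q}$ admits only the trivial derivation so that the zero map is a legitimate starting datum.
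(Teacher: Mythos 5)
Your proposal is correct and follows exactly the route the paper intends: apply the preceding extension theorem with $F=\mathbb{Q}$, the zero derivation as starting datum, and a function $u$ that is nonzero at some point of the (nonempty) algebraic base $S$ of $\mathbb{K}$ over $\mathbb{Q}$. The paper only sketches this in the sentence preceding the theorem, so your added justifications (that $S\neq\varnothing$ by a cardinality argument and that the zero map is the unique derivation on $\mathbb{Q}$) are welcome but not a departure from its argument.
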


\section{Differential operators in polynomial rings}

Let $\mathbb{K}\in \left\{ \mathbb{R}, \mathbb{C}\right\}$ and consider
\[
\mathscr{P}(\mathbb{K}) =
\left\{ p\colon \mathbb{K}\to \mathbb{K}\, \vert \, \text{$p$ is a polynomial} \right\}.
\]

Below we present some examples that show that on $\mathscr{P}(\mathbb{K})$ there exists a large variety of mappings that satisfy the Leibniz rule. Contrary to the case of the spaces of smooth functions that was discussed by K\"onig and Milman, see Theorem \ref{thmKM} below.

\subsection{Examples}

\begin{dfn}
Let $p\in \mathscr{P}(\mathbb{K})$ be a polynomial, $x_{0}\in \mathbb{K}$ and $k$ be a nonnegative integer. We say that the \emph{order of zero} of the polynomial $p$ at $x_{0}$ is $k$, if there exists a polynomial $q\in \mathscr{P}(\mathbb{K})$ such that $q(x_{0})\neq 0$ and
\[
p(x)= (x-x_{0})^{k}\cdot q(x)
\qquad
\left(x\in \mathbb{K}\right).
\]
The order of zero of a polynomial $p$ at the point $x_{0}$ will be denoted by $n_{x_{0}}(p)$.
\end{dfn}

\begin{ex}
Let $x_{0}\in \mathbb{K}$ be arbitrarily fixed and define the mapping $N\colon \mathscr{P}(\mathbb{K}) \to \mathscr{P}(\mathbb{K})$ by
\[
N(p)= n_{x_{0}}(p) \cdot p
\qquad
\left(p\in \mathscr{P}(\mathbb{K})\right).
\]
Then $N\colon \mathscr{P}(\mathbb{K})\to \mathscr{P}(\mathbb{K})$ fulfills the Leibniz rule on $\mathscr{P}(\mathbb{K})$.
\end{ex}

\begin{ex}
The mapping $F\colon \mathscr{P}(\mathbb{K}) \to \mathscr{P}(\mathbb{K})$ defined through
\[
F(p)= \mathrm{deg}(p) \cdot p
\qquad
\left(p\in \mathscr{P}(\mathbb{K})\right),
\]
where $\mathrm{deg}(p)$ denotes the degree of the polynomial $p$, fulfills the Leibniz rule on $\mathscr{P}(\mathbb{K})$.
\end{ex}

\begin{ex}
Let us fix a polynomial $p_0\in \mathscr{P}(\mathbb{K})$ and consider the mapping $P\colon \mathscr{P}(\mathbb{K}) \to \mathscr{P}(\mathbb{K})$ defined by
\[
P(p)=p' \cdot p_0
\qquad
(p\in \mathscr{P}(\mathbb{K})).
\]
Then $P$ satisfies the Leibniz rule on $\mathscr{P}(\mathbb{K})$. From this example, it follows that an action of an operator satisfying the Leibniz rule on a polynomial can increase its degree.
\end{ex}

\begin{ex}
The mapping $E\colon \mathscr{P}(\mathbb{K}) \to \mathscr{C}(\mathbb{K})$ defined through
\[
E(p)= p \cdot \ln(|p| )
\qquad
\left(p\in \mathscr{P}(\mathbb{K})\right)
\]
satisfies the Leibniz rule. Here we adopt the convention $0\cdot \ln(0)=0$.
\end{ex}

\begin{ex}
Let $d\colon \mathbb{K}\to \mathbb{K}$ be a derivation, then the mapping
\[
K(p)= p^{d}
\]
also fulfills the Leibniz rule on $\mathscr{P}(\mathbb{K})$.
\end{ex}

\begin{ex}
Let $f\colon \mathbb{C}\to \mathbb{N}$ be an arbitrary function and let $q_0\in \mathscr{P}(\mathbb{C})$ be a fixed polynomial.
Define map $Q\colon \mathscr{P}(\mathbb{C}) \to \mathscr{P}(\mathbb{C})$ as follows. If $p$ is a constant polynomial, then put $Q(p)=0$. If $p$ is nonconstant, then it decomposes uniquely into linear terms, i.e.,
$$
p(z)= a\cdot \prod_{j=1}^{N}(z-z_{j}).
$$
In this case, define
\[
Q(p) = a\cdot \sum_{k=1}^N q_0^{f(z_k)}\cdot \prod_{j=1, j \neq k}^{N}(z-z_{j}).
\]
We show that $Q$ satisfies the Leibniz rule. Fix $p, q \in \mathscr{P}(\mathbb{C})$. If $p$ or $q$ is constant, then the assertion is true by the definition of $Q$ (note that $Q$ is a homogeneous mapping). Next, assume that $p$ and $q$ are nonconstant. Therefore, they decompose as
$$
p(z)= a_1\cdot \prod_{j=1}^{N}(z-z_{j}), \qquad q(z)= a_2\cdot \prod_{j=N+1}^{M}(z-z_{j}).
$$
Therefore
\begin{align*}
Q(p)\cdot q+p\cdot Q(q) &=
a_1\cdot \left(\sum_{k=1}^n q_0^{f(z_k)}\cdot \prod_{j=1, j \neq k}^{N}(z-z_{j})\right) \cdot a_2\cdot \prod_{j=N+1}^{M}(z-z_{j}) \\&+ a_2\cdot \left(\sum_{k=N+1}^M q_0^{f(z_k)}\cdot \prod_{j=N+1, j \neq k}^{M}(z-z_{j})\right) \cdot a_1\cdot \prod_{j=1}^{N}(z-z_{j}) \\&= a_1a_2\cdot \left(\sum_{k=1}^N q_0^{f(z_k)}\cdot \prod_{j=1, j \neq k}^{M}(z-z_{j})\right)
\\&+ a_1a_2\cdot \left(\sum_{k=N+1}^M q_0^{f(z_k)}\cdot \prod_{j=1, j \neq k}^{M}(z-z_{j})\right) = Q(p \cdot q).
\end{align*}
Since map $f$ can be arbitrary, then the degree of a polynomial $Q(p)$ can be arbitrarily large, even if $p$ is a polynomial of degree $1$.
\end{ex}

\begin{rem}
An analogous example can be given for real polynomials, as above.
Indeed, if $p\in \mathscr{P}(\mathbb{R})$, then
\[
p(z)= a \prod_{i=1}^{n}(z-z_{i})\cdot \prod_{j=1}^{l}(z^{2}+\alpha_{j}z+\beta_{j})
\qquad
\left(z\in \mathbb{R}\right),
\]
where for all $j=1, \ldots, l$, the polynomials $z^{2}+\alpha_{j}z+\beta_{j}$ are irreducible over $\mathbb{R}$.

If $n=0$ and $k=0$, so if $p$ is a constant, then we simply put $Q(p)=0$. Otherwise, let
\[
Q(p)= a \left(\sum_{k=1}^n q_0^{f(z_k)}\cdot \prod_{j=1, j \neq k}^{n}(z-z_{j})\right)\cdot \prod_{j=1}^{l}(z^{2}+\alpha_{j}z+\beta_{j}).
\]
A similar computation as was carried out above shows that the mapping $Q\colon \mathscr{P}(\mathbb{R})\to \mathscr{P}(\mathbb{R})$ fulfils the Leibniz rule. Again, since $f$ can be any mapping, the polynomial $Q(p)$ can admit arbitrarily large degree.
\end{rem}

\begin{rem}
Observe that the Leibniz rule \eqref{L}
is a homogeneous and linear equation for the unknown operator $T\colon \mathscr{P}(\mathbb{K})\to \mathscr{C}(\mathbb{K})$. Thus its solution space forms a linear space. Therefore, any linear combination of the above operators $N, F, P, E, K$ and $Q$, also satisfies the same identity.
\end{rem}

\subsection{Complex polynomials}

In this subsection, we present our main result, which is a characterization of all operators on $\mathscr{P}(\mathbb{C})$ that satisfy the Leibniz rule. Thus here we consider the case $\mathbb{K}= \mathbb{C}$.

Recall, that due to the Theorem of Algebra, if $p\in \mathscr{P}(\mathbb{C})$, then we have
\[
p(z)= a\cdot \prod_{j=1}^{N}(z-z_{j})
\]
with an appropriate nonnegative integer $N$ and with some complex numbers $a, z_{1}, \ldots, z_{N}$.

Let now $T\colon \mathscr{P}(\mathbb{C}) \to \mathscr{C}(\mathbb{C})$ be an operator that fulfills the Leibniz rule. Then, by induction, we obtain that
\[
T(p_{1} \cdots p_{n})= \sum_{j=1}^{n}\left(\prod_{i\neq j}p_{i}\right) \cdot T(p_{j})
\qquad
\left(p_{1}, \ldots, p_{n}\in \mathscr{P}(\mathbb{C})\right)
\]
for all positive integer $n$ and for all $p_{1}, \ldots, p_{n}\in \mathscr{P}(\mathbb{C})$.

Thus we immediately get that
\[
T(p)(z)= a\cdot \sum_{j=1}^{N}\left(\prod_{i\neq j} (z-z_{i})\right) \cdot T(z-z_{j})+
T(a)\prod_{j=1}^{N}(z-z_{j}).
\]
holds for all $p\in \mathscr{P}(\mathbb{C})$. From this, we obtain that it is enough to determine the action of the operator $T$ on complex polynomials that are of the form
\[
p(z)=az+b
\qquad
\left(z\in \mathbb{C}\right)
\]
with some appropriate complex constants $a, b$.

In the case when the domain of operator $T$ is equal to the space $\mathscr{C}^k(I)$, then a localization lemma \cite[Lemma 3.2]{KonMil18} plays a key role. It says that the value $T(p)(z)$ of operator $T$ acting on a polynomial $p$ at a point $z$ depends only on $z, p(z), p'(z), \dots , p^{(k)}(z)$. In the case of the space $\mathscr{P}(\mathbb{C})$ the localization property is not so powerful. Thus while proving our main result, which is stated below, we need to apply a new approach. We begin with the formulation of a localization result for the space $\mathscr{P}(\mathbb{C})$, which is a consequence of our Theorem \ref{t1} below.

\begin{lem}[Localization]
Suppose that the mapping $T\colon \mathscr{P}(\mathbb{K})\to \mathscr{C}(\mathbb{K})$ fulfills the Leibniz rule \eqref{L} for all $p, q\in \mathscr{P}(\mathbb{K})$.
Then $T$ is \emph{localized}, i.e., there exists a function $F\colon \mathbb{K}\times \mathbb{K}\to \mathbb{K}$ such that
\[
T(p)(z)= F(z, p(z))
\qquad
\left(z\in \mathbb{K}\right).
\]
Moreover, if the range of the operator $T$ is $\mathscr{P}(\mathbb{K})$, then $F$ is a two-variable $\mathbb{K}$-valued \emph{polynomial}.
\end{lem}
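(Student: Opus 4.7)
The plan is to derive the Localization Lemma as a direct consequence of the main characterization result, Theorem \ref{t1}. Once every Leibniz operator on $\mathscr{P}(\mathbb{C})$ is classified, the two-variable function $F\colon\mathbb{K}\times\mathbb{K}\to\mathbb{K}$ can be read off from the explicit form provided there.

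I would first collect the elementary consequences of the Leibniz rule \eqref{L} that are used throughout. Setting $f = g = 1$ yields $T(1) = 2T(1)$, hence $T(1) = 0$. Combining this with the factorization $p(z) = a\prod_{j=1}^{N}(z - z_j)$ available on $\mathscr{P}(\mathbb{C})$ by the Fundamental Theorem of Algebra, together with the iterated Leibniz identity $T(p_1\cdots p_n) = \sum_j (\prod_{i\neq j}p_i)\cdot T(p_j)$, reduces the knowledge of $T$ to its action on constants $a$ and on linear polynomials $z - z_0$, as already recorded in the paragraph preceding the lemma.

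Next, I would invoke Theorem \ref{t1} to extract the explicit form of $T(a)$ and of $T(z - z_0)$, and reassemble these building blocks via the factorization formula. The theorem is expected to impose enough rigidity on these pieces that, after substitution into the factorization, the resulting expression at any point $z_0$ collapses to a quantity depending only on $(z_0, p(z_0))$. One then defines $F(z, w)$ as the common value of $T(p)(z)$ taken over all polynomials $p$ with $p(z) = w$; the consistency of this definition across different representatives $p$ is exactly what Theorem \ref{t1} supplies.

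The main obstacle is precisely this consistency: one must show that $T(p)(z_0)$ depends only on $z_0$ and the scalar $p(z_0)$, and not on the multiplicities or positions of the other zeros of $p$ nor on the leading coefficient $a$. This cannot be read off from the Leibniz rule alone and is the substantive content of Theorem \ref{t1}. For the \emph{moreover} part, when the range of $T$ is contained in $\mathscr{P}(\mathbb{K})$, the formulas produced by Theorem \ref{t1} are polynomial in both $z$ and in the fibre value $p(z)$, so the representing function $F$ inherits the structure of a two-variable $\mathbb{K}$-valued polynomial.
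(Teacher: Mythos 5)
Your overall strategy --- deducing the lemma from Theorem \ref{t1} --- is exactly what the paper does: the authors give no independent proof and simply announce the lemma as a consequence of Theorem \ref{t1}. The preliminary reductions you list ($T(1)=0$, the iterated Leibniz identity, reduction to constants and linear factors) are correct and match the paper's setup.

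However, the step you yourself single out as the main obstacle --- that $T(p)(z_0)$ depends only on $(z_0,p(z_0))$, so that $F$ is well defined --- is not actually supplied by Theorem \ref{t1}, and in fact it fails. The formula of Theorem \ref{t1} expresses $T(p)(z)$ through the individual roots $z_j$ and the leading coefficient $a$ via essentially arbitrary functions $\psi_k$ and Leibniz maps $\tilde{\varphi}_k$, and nothing forces this to collapse to a function of $(z,p(z))$ alone. Concretely, take $T(p)=p'$, a legitimate Leibniz operator covered by Theorem \ref{t1} (with $\psi_0\equiv 1$ and all other data zero). For $p_1(z)=z$ and $p_2(z)=z^2$ one has $p_1(1)=p_2(1)=1$ but $T(p_1)(1)=1\neq 2=T(p_2)(1)$; the paper's own examples $N(p)=n_{x_0}(p)\cdot p$ and $\deg(p)\cdot p$ behave the same way. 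Hence no function $F\colon\mathbb{K}\times\mathbb{K}\to\mathbb{K}$ with $T(p)(z)=F(z,p(z))$ can exist for these operators, and the lemma as literally stated cannot be derived --- from Theorem \ref{t1} or otherwise. Your proposal therefore stalls precisely at the consistency check you identified: rather than attributing it to Theorem \ref{t1}, you would need to reinterpret the statement (for instance, letting $F$ depend on a jet of $p$ at $z$ or on the full root data) before any proof along these lines can go through.
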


Now, we will state and prove our main result without assuming the localization property.

\begin{thm}\label{t1}
The mapping $T\colon \mathscr{P}(\mathbb{C})\to \mathscr{P}(\mathbb{C})$ fulfills the Leibniz rule \eqref{L}
for all $p, q\in \mathscr{P}(\mathbb{C})$ if and only if there exist sequences of functions $(\psi_{k})_{k\in \mathbb{N}_{0}}$ and $(\tilde{\varphi_{k}})_{k\in \mathbb{N}_{0}}$ such that for all $k\in \mathbb{N}_{0}$ we have
$\tilde{\varphi}_{k}(ab)= a\tilde{\varphi}_{k}(b)+b\tilde{\varphi}_{k}(a)$ for all $a, b\in \mathbb{C}$ such that
\[
T(p)(z)=
a\cdot \sum_{j=1}^{N}\left(\prod_{i\neq j} (z-z_{i})\right) \cdot \left(\sum_{k=0}^{n}\psi_{k}\left(-z_{j}\right) z^{k} +\sum_{k=0}^{n}\tilde{\varphi}_{k}(-z_{j})z^{k} \right)
+\left(\sum_{k=0}^{n} \tilde{\varphi}_{k}(a)z^{k}\right)\prod_{j=1}^{N}(z-z_{j}),
\]
provided that $p(z)= a\prod_{j=1}^{N}(z-z_{j})$ and $n\in \mathbb{N}_0$ depends upon $p$.
\end{thm}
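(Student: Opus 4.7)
The plan is to exploit unique factorization in $\mathbb{C}[z]$ to reduce everything to understanding $T$ on constants and on monic linear factors $z - z_j$, and then to analyze those two pieces separately.

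We first iterate the Leibniz rule on $p(z) = a \prod_{j=1}^N (z - z_j)$ to obtain
\[
T(p)(z) = \sum_{j=1}^{N}\Bigl(\prod_{i\neq j}(z - z_i)\Bigr)\,T(z - z_j)(z) + T(a)(z)\prod_{j=1}^{N}(z - z_j),
\]
which is a straightforward induction on $N$ already sketched in the excerpt. Hence $T$ is determined by the two assignments $a\mapsto T(a)$ on constants and $c \mapsto T(z - c)$ on monic linear polynomials.

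Next, since $T(a) \in \mathscr{P}(\mathbb{C})$, we expand $T(a)(z) = \sum_{k \geq 0}\tilde{\varphi}_k(a)z^k$ with only finitely many $\tilde{\varphi}_k(a)$ nonzero for each fixed $a$. Specializing Leibniz to constants $a, b \in \mathbb{C}$ gives the polynomial identity $T(ab) = b\,T(a) + a\,T(b)$; comparing coefficients of $z^k$ shows that each $\tilde{\varphi}_k\colon \mathbb{C} \to \mathbb{C}$ satisfies $\tilde{\varphi}_k(ab) = a\tilde{\varphi}_k(b) + b\tilde{\varphi}_k(a)$, i.e.\ is a Leibniz mapping in the sense of the excerpt. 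For the monic linear factors we set $T(z - c)(z) = \sum_{k \geq 0}\beta_k(-c)z^k$ with $\beta_k \colon \mathbb{C} \to \mathbb{C}$ \emph{a priori} arbitrary, and then \emph{define} $\psi_k := \beta_k - \tilde{\varphi}_k$. Substituting into the displayed identity yields the claimed formula, with $n(p)$ chosen larger than the degrees of $T(a)$ and of every $T(z - z_j)$, $j=1,\ldots,N$.

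For the converse, given sequences $(\psi_k)$ and Leibniz mappings $(\tilde{\varphi}_k)$, we define $T(p)$ by the explicit formula; well-definedness follows from the symmetry of the right-hand side in $z_1, \ldots, z_N$ together with the uniqueness of the factorization $a\prod_j(z - z_j)$ over $\mathbb{C}$ up to reordering. To verify Leibniz for $p = a\prod_j(z - z_j)$ and $q = b\prod_k(z - w_k)$, we expand both $T(pq)$ and $T(p)q + p\,T(q)$ directly: the sums over the roots of $p$ and $q$ match after relabelling, and the remaining constant-part comparison reduces precisely to $\tilde{\varphi}_k(ab) = a\tilde{\varphi}_k(b) + b\tilde{\varphi}_k(a)$. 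The main obstacle is not any single deep step but rather the bookkeeping: the split $\beta_k = \psi_k + \tilde{\varphi}_k$ is not canonical (only $\beta_k$ itself is determined by $T$), so the write-up must cleanly separate what is \emph{forced} by Leibniz---that the coefficient functions of the restriction of $T$ to $\mathbb{C}$ form a Leibniz mapping for each $k$---from what is \emph{free}, namely the coefficient functions of $T$ on monic linears, modulo this identification.
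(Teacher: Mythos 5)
Your proposal is correct and follows the same overall strategy as the paper: iterate the Leibniz rule over the factorization $p(z)=a\prod_{j}(z-z_{j})$, show that the coefficient functions of $T$ restricted to constants satisfy the Leibniz identity, and treat the coefficients of $T$ on monic linear factors as free data, with the converse checked by direct expansion. The one genuine difference is that you bypass the longest computation in the paper's proof: there, one first derives a formula for $T(az+b)$ for arbitrary $a\neq 0$, via the recurrences $\varphi_{k}(a,b)=\varphi_{k}\left(1,\tfrac{b}{a}\right)a+\varphi_{k-1}(0,a)+\varphi_{k}(0,a)\tfrac{b}{a}$ combined with the Leibniz property of $a\mapsto\varphi_{k}(0,a)$, and only then specializes to $a=1$, $b=-z_{j}$. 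You observe, correctly, that the final formula involves $T$ only on constants and on the monic factors $z-z_{j}$, so the general linear case is never needed; your $\beta_{k}$ coincides with the paper's $\varphi_{k}(1,\cdot)$, and your $\psi_{k}=\beta_{k}-\tilde{\varphi}_{k}$ is exactly the paper's $\psi_{k}(c)=\varphi_{k}(1,c)-\varphi_{k}(0,c)$, so nothing is lost by the shortcut. Your remark that the split $\beta_{k}=\psi_{k}+\tilde{\varphi}_{k}$ is not canonical --- only the sum is determined by $T$, and the $\psi_{k}$ in the statement are unconstrained --- is accurate and is, if anything, a clarification the paper leaves implicit, as is your explicit note that well-definedness of the converse construction rests on the symmetry of the formula in the roots.
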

\begin{proof}
Firstly, we consider $\mathscr{P}(\mathbb{C})$-valued solutions of the Leibniz rule \eqref{L}.
Then
\begin{equation}\label{az+b}
T(az+b)= \sum_{k=0}^{n}\varphi_{k}(a, b)z^{k}
\qquad
\left(a, b, z\in \mathbb{C}\right),
\end{equation}
with an appropriate $n\in \mathbb{N}$ that depends upon $a$ and $b$, and functions $\varphi_{k}\colon \mathbb{C}^{2}\to \mathbb{C}$, $k=0, 1, \ldots, n$.

First, let us consider constant polynomials. We have
$$
T(b)(z) = \sum_{k=0}^n \varphi_k(0,b)z^k
$$
for some $n \in \mathbb{N}$ and each $b \in \mathbb{C}$ and $z \in \mathbb{C}$. Due to \eqref{L} we have that for all $b, c\in \mathbb{C}$ there exists some $n \in \mathbb{N}$ that is equal to the greatest of the degree of the polynomials $T(b), T(c), T(bc)$ and
\begin{align*}
T(bc)(z) &= bT(c)(z) + c T(b)(z) \\&= \left( \sum_{k=0}^n \varphi_k(0,b)z^k\right)c + \left( \sum_{k=0}^n \varphi_k(0,c)z^k\right)b
\\&= \sum_{k=0}^n \left(\varphi_k(0,b)c + \varphi_k(0,c)b \right)z^k
\end{align*}
On the other hand,
$$ T(bc)(z) = \sum_{k=0}^{n}\varphi_{k}(0, bc)z^{k} .$$
Comparing the coefficients we arrive at
$$\varphi_k(0,bc) = \varphi_k(0,b)c + \varphi_k(0,c)b$$
for all $k \in \mathbb{N}$ and $b, c \in \mathbb{C}$.

Next, substitute in \eqref{az+b} $b=0$. We have
$$ T(az)= \sum_{k=0}^{n}\varphi_{k}(a, 0)z^{k}.$$
On the other hand, using \eqref{L} first, we get
$$
T(az) = aT(z) + z T(a) = a \sum_{k=0}^{n}\varphi_{k}(1, 0)z^{k} + \sum_{k=0}^{n}\varphi_{k}(0,a)z^{k+1}
$$
for all $a \in \mathbb{C}$ and $z \in \mathbb{C}$. Comparing both polynomials we obtain
$$\varphi_0(a,0) = \varphi_0(1,0)a,$$
$$\varphi_k(a,0) = \varphi_k(1,0)a + \varphi_{k-1}(0,a)$$
for all $k \in \mathbb{N}$ and $a \in \mathbb{C}$.

Let now $a, b \in \mathbb{C}$ be arbitrary scalars such that $a\neq 0$. By \eqref{L} and the previous observations we have
\begin{align*}
T(az+b) &= T\left( a \cdot \left( z + \frac{b}{a} \right) \right)
= aT\left( z + \frac{b}{a} \right) + \left( z + \frac{b}{a} \right) T(a) \\
&= \sum_{k=0}^{n}a \varphi_{k}\left(1, \frac{b}{a}\right)z^{k} + \sum_{k=0}^{n}\varphi_{k}\left(0, a\right)z^{k+1}
+ \sum_{k=0}^{n}\frac{b}{a}\varphi_{k}\left(0, a\right)z^{k} .
\end{align*}
Compare this equality with \eqref{az+b} to derive the recurrences:
$$\varphi_0(a,b) = \varphi_0\left(1,\frac{b}{a}\right)a + \varphi_0\left(0,a\right)\frac{b}{a},$$
$$\varphi_k(a,b) = \varphi_k\left(1,\frac{b}{a}\right)a + \varphi_{k-1}(0,a) + \varphi_k\left(0,a\right)\frac{b}{a},$$

For all $k=0, 1, \ldots, n$, the mapping
\[
a\longmapsto \varphi_{k}(0, a)
\]
fulfills the Leibniz rule \eqref{L}, we have that
\[
\varphi_{k}(0, b)=\varphi_{k}\left(0, a \frac{b}{a}\right)=
a\varphi_{k}\left(0, \frac{b}{a}\right) + \frac{b}{a}\varphi_{k}\left(0, a \right),
\]
that is,
\[
\frac{b}{a}\varphi_{k}\left(0, a \right) = \varphi_{k}(0, b) - a\varphi_{k}\left(0, \frac{b}{a}\right)
\]
holds for all $a, b\in \mathbb{C}$, $a\neq 0$.

Therefore
\begin{align*}
T(az+b)& =
\sum_{k=0}^{n}\varphi_{k}(a, b)z^{k}=
\varphi_{0}(a, b) +\sum_{k=1}^{n}\varphi_{k}(a, b)z^{k}
\\
&=
\varphi_0\left(1,\frac{b}{a}\right)a + \varphi_0\left(0,a\right)\frac{b}{a}
+
\sum_{k=1}^{n}\left(\varphi_k\left(1,\frac{b}{a}\right)a + \varphi_{k-1}(0,a) + \varphi_k\left(0,a\right)\frac{b}{a}\right) z^{k}
\\
&=\varphi_0\left(1,\frac{b}{a}\right)a+ \varphi_{0}(0, b) - a\varphi_{0}\left(0, \frac{b}{a}\right)
\\
&+\sum_{k=1}^{n} \left(\varphi_k\left(1,\frac{b}{a}\right)a
+ \varphi_{k-1}(0,a) +\varphi_{k}(0, b) - a\varphi_{k}\left(0, \frac{b}{a} \right)\right)z^{k}
\\
&
=\psi_{0}\left(\frac{b}{a}\right)a+\varphi_{0}(0, b)
+
\sum_{k=1}^{n}\left(\psi_{k}\left(\frac{b}{a}\right)a +\varphi_{k-1}(0, a)+\varphi_{k}(0, b)\right)z^{k}
\\
&= \sum_{k=0}^{n} a\psi_{k}\left(\frac{b}{a}\right) z^{k} +\sum_{k=0}^{n}\varphi_{k}(0, b)z^{k} +\sum_{k=1}^{n}\varphi_{k-1}(0, a)z^{k}
\end{align*}
for all $a, b\in \mathbb{C}$ with $a\neq 0$, where for all $k=0, 1, \ldots, n$, the functions $\psi_{k}\colon \mathbb{C}\to \mathbb{C}$ are defined through
\[
\psi_{k}(c)= \varphi_{k}(1, c)-\varphi_{k}(0, c)
\qquad
\left(c\in \mathbb{C}\right).
\]

Define the functions $\tilde{\varphi_{k}}\colon \mathbb{C}\to \mathbb{C}$ through
\[
\tilde{\varphi_{k}}(a)= \varphi_{k}(0, a)
\qquad
\left(x\in \mathbb{C}\right).
\]
Observe that for all $k=0, \ldots, n$, the mapping $\tilde{\varphi_{k}}$ satisfies the Leibniz rule \eqref{L}. So especially $\tilde{\varphi_{k}}(1)=0$.
With these functions, we can deduce that
\[
T(az+b)= \sum_{k=0}^{n} a\psi_{k}\left(\frac{b}{a}\right) z^{k} +\sum_{k=0}^{n}\tilde{\varphi}_{k}(b)z^{k} +\sum_{k=1}^{n}\tilde{\varphi_{k-1}}(a)z^{k}
\]
for all $a, b\in \mathbb{C}$ and $z\in \mathbb{C}$.

Therefore for any fixed $z_{j}\in \mathbb{C}$, we have
\begin{align*}
T(z-z_{j})&=
\sum_{k=0}^{n}\psi_{k}\left(-z_{j}\right) z^{k} +\sum_{k=0}^{n}\tilde{\varphi}_{k}(-z_{j})z^{k} +\sum_{k=1}^{n}\tilde{\varphi}_{k-1}(1)z^{k}
\\
&= \sum_{k=0}^{n}\psi_{k}\left(-z_{j}\right) z^{k} +\sum_{k=0}^{n}\tilde{\varphi}_{k}(-z_{j})z^{k}
\end{align*}
and also for any fixed $a\in \mathbb{C}$,
\[
T(a)= \sum_{k=0}^{n} \tilde{\varphi}_{k}(a)z^{k}.
\]

Together with \eqref{L}, this implies that if $p\in \mathscr{P}(\mathbb{\mathbb{C}})$ with
\[
p(z)= a\prod_{j=1}^{N}(z-z_{j}),
\]
then
\begin{align*}
T(p(z))& =
a\cdot \sum_{j=1}^{N}\left(\prod_{i\neq j} (z-z_{i})\right) \cdot T(z-z_{j})+
T(a)\prod_{j=1}^{N}(z-z_{j}) \\
&= a\cdot \sum_{j=1}^{N}\left(\prod_{i\neq j} (z-z_{i})\right) \cdot \left(\sum_{k=0}^{n}\psi_{k}\left(-z_{j}\right) z^{k} +\sum_{k=0}^{n}\tilde{\varphi}_{k}(-z_{j})z^{k}\right)\\
&+\left(\sum_{k=0}^{n} \tilde{\varphi}_{k}(a)z^{k}\right)\prod_{j=1}^{N}(z-z_{j})
\end{align*}

Conversely, let $p, q\in \mathscr{P}(\mathbb{C})$ with
\[
p(z)= a_{1}\prod_{j=1}^{N}(z-z_{j})
\quad
\text{and}
\quad
q(z)= a_{2}\prod_{j=N+1}^{M}(z-z_{j}).
\]

Then
\begin{align*}
T(p)q&+T(q)p= \left(a_{1}\cdot \sum_{j=1}^{N}\left(\prod_{\substack{i=1 \\i\neq j}} ^{N}(z-z_{i})\right) \cdot T(z-z_{j})+
T(a_1)\prod_{j=1}^{N}(z-z_{j})\right) \cdot a_{2}\prod_{j=N+1}^{M}(z-z_{j})
\\
&+ \left(a_{2}\cdot \sum_{j=N+1}^{M}\left(\prod_{\substack{i=N+1 \\i\neq j}}^{M} (z-z_{i})\right) \cdot T(z-z_{j})+
T(a_2)\prod_{j=N+1}^{M}(z-z_{j})\right) \cdot a_{1}\prod_{j=1}^{N}(z-z_{j})
\\
&= a_{1}a_{2}\cdot \sum_{j=1}^{M} \left(\prod_{\substack{i=1 \\i\neq j}} ^{M}(z-z_{i})\right) \cdot T(z-z_{j}) +(T(a_{1})a_{2}+T(a_{2})a_{1})\prod_{j=1}^{M}(z-z_{j})
\\
&= a_{1}a_{2}\cdot \sum_{j=1}^{M} \left(\prod_{\substack{i=1 \\i\neq j}} ^{M}(z-z_{i})\right) \cdot T(z-z_{j}) +T(a_{1}a_{2})\prod_{j=1}^{M}(z-z_{j})
\\
& = T(pq).
\end{align*}
\end{proof}

From the above theorem, we can derive a few corollaries for some special types of operators. We say that an operator
$T\colon \mathscr{P}(\mathbb{C})\to \mathscr{P}(\mathbb{C})$ \emph{decreases the degree} if for any polynomial $p$ of degree $N$, the degree of $T(p)$ is smaller than $N$. Moreover, operator $T$ does not increase the degree if, for any polynomial $p$ of degree $N$, the degree of $T(p)$ is not greater than $N$.

\begin{cor}\label{cor1}
The mapping $T\colon \mathscr{P}(\mathbb{C})\to \mathscr{P}(\mathbb{C})$ fulfills the Leibniz rule \eqref{L}
for all $p, q\in \mathscr{P}(\mathbb{C})$ and decreases the degree if and only if there exist two functions $\psi_{0}$ and $\tilde{\varphi_{0}}$ such that we have
$\tilde{\varphi}_{0}(ab)= a\tilde{\varphi}_{0}(b)+b\tilde{\varphi}_{0}(a)$ for all $a, b\in \mathbb{C}$ such that
\[
T(p)(z)=
a\cdot \sum_{j=1}^{N}\left(\prod_{i\neq j} (z-z_{i})\right) \cdot \left(\psi_{0}\left(-z_{j}\right) +\tilde{\varphi}_{0}(-z_{j})\right).
\]
provided that $p(z)= a\prod_{j=1}^{N}(z-z_{j})$.

Consequently, for each monomial $p(z) = z^N$ there exists a constant $c_p \in \mathbb{C}$ such that
$$
T(p) = c_p\cdot p'.
$$
\end{cor}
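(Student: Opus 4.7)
My plan is to start from the representation of $T$ provided by Theorem~\ref{t1} and to show that the hypothesis of decreasing the degree forces the coefficient functions $\psi_k$ and $\tilde{\varphi}_k$ with $k\geq 1$, as well as all of the $\tilde{\varphi}_k$, to vanish identically. Once this is done, the asserted formula is a direct substitution, and the consequence for monomials follows by evaluating at $z_j=0$.

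For the forward direction I would carry out two short steps. First, since a nonzero constant $a$ has degree $0$, the hypothesis forces $T(a)\equiv 0$; as Theorem~\ref{t1} gives $T(a)=\sum_{k=0}^{n}\tilde{\varphi}_k(a)z^k$, this yields $\tilde{\varphi}_k\equiv 0$ for every $k\in\mathbb{N}_0$. In particular, the $\tilde{\varphi}_0$ appearing in the statement may be taken as the trivial (zero) derivation, and the summand $T(a)\prod_j(z-z_j)$ of the representation disappears. Second, for $a\neq 0$ the polynomial $az+b$ has degree $1$, so $T(az+b)$ has to be constant. Substituting the vanishing of the $\tilde{\varphi}_k$'s into the explicit expression for $T(az+b)$ derived in the proof of Theorem~\ref{t1}, the coefficient of $z^k$ with $k\geq 1$ collapses to $a\psi_k(b/a)$; vanishing for every $a\neq 0$ and every $b\in\mathbb{C}$ forces $\psi_k\equiv 0$ whenever $k\geq 1$. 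What remains of the representation is exactly the formula claimed by the corollary.

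The converse is essentially immediate: each summand of the displayed formula carries the factor $\prod_{i\neq j}(z-z_i)$ of degree $N-1$, so $\deg T(p)\leq N-1<N$, while the Leibniz rule is inherited from Theorem~\ref{t1}. Finally, for the monomial $p(z)=z^N$ I specialize to $a=1$ and $z_j=0$ for all $j$; since $\tilde{\varphi}_0(0)=0$ (because $\tilde{\varphi}_0(0\cdot 0)=0$), the formula collapses to $T(p)(z)=N\psi_0(0)z^{N-1}=\psi_0(0)\cdot p'(z)$, so the scalar $c_p:=\psi_0(0)$ works and is in fact independent of $p$.

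I do not anticipate a serious obstacle, since both directions reduce to bookkeeping with the coefficient identities already recorded in the proof of Theorem~\ref{t1}; the only mild care needed is in tracking how each application of the degree bound eliminates a specific layer of the coefficient functions $\psi_k$ and $\tilde{\varphi}_k$.
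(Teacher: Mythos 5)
Your argument is correct and follows essentially the same route as the paper: the degree hypothesis applied to constants forces $T(b)=0$, hence $\tilde{\varphi}_k\equiv 0$ for all $k$, and applied to degree-one polynomials forces the representation of Theorem~\ref{t1} to terminate at $n=0$ (equivalently $\psi_k\equiv 0$ for $k\geq 1$). You merely spell out the details that the paper's two-sentence proof leaves implicit (the converse verification and the evaluation at $z^N$ giving $c_p=\psi_0(0)$, in fact independent of $p$), so no substantive difference.
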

\begin{proof}
Since $T$ decreases the degree, then necessarily $T(b) = 0$ for any constant polynomial $b\in \mathscr{P}(\mathbb{C})$. Therefore,
$$\sum_{k=0}^n \varphi_k(0,b)z^k=0.$$
Moreover, since the image of each polynomial of the degree $1$ is a constant map, we have that $n=0$ in the statement of Theorem \ref{t1}.
\end{proof}

\begin{cor}\label{cor2}
The mapping $T\colon \mathscr{P}(\mathbb{C})\to \mathscr{P}(\mathbb{C})$ fulfills the Leibniz rule \eqref{L}
for all $p, q\in \mathscr{P}(\mathbb{C})$ and not increases the degree if and only if there exist four functions $\psi_{0}$, $\psi_{1}$, $\tilde{\varphi_{0}}$ and $\tilde{\varphi_{1}}$ such that we have
$\tilde{\varphi}_{k}(ab)= a\tilde{\varphi}_{k}(b)+b\tilde{\varphi}_{k}(a)$ for all $a, b\in \mathbb{C}$ and $k\in \{0,1\}$ such that
\[
T(p)(z)=
a\cdot \sum_{j=1}^{N}\left(\prod_{i\neq j} (z-z_{i})\right) \cdot \left(\sum_{k=0}^{1}\psi_{k}\left(-z_{j}\right) z^{k} +\sum_{k=0}^{1}\tilde{\varphi}_{k}(-z_{j})z^{k} \right)
+\tilde{\varphi}_{0}(a)\prod_{j=1}^{N}(z-z_{j}),
\]
provided that $p(z)= a\prod_{j=1}^{N}(z-z_{j})$.

Consequently, for each monomial $p(z) = z^N$ there exist constants $c_p, d_p \in \mathbb{C}$ such that
$$
T(p) = c_p\cdot p' + d_p \cdot p.
$$
\end{cor}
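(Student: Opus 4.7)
The plan is to invoke Theorem~\ref{t1} and let the non-increase-of-degree hypothesis pin down which of the functions in the sequences $(\psi_k)_{k\in\mathbb{N}_0}$ and $(\tilde\varphi_k)_{k\in\mathbb{N}_0}$ are free and which are forced to vanish. The goal is to show $\tilde\varphi_k\equiv 0$ for $k\geq 1$ and $\psi_k\equiv 0$ for $k\geq 2$, at which point the representation of Theorem~\ref{t1} collapses to the formula stated in the corollary.

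First I would test the representation on a constant polynomial $p(z)\equiv a$. Here $N=0$, the first summation is empty and the product $\prod_{j=1}^{0}(z-z_j)$ equals $1$, so Theorem~\ref{t1} reduces to $T(a)(z)=\sum_{k=0}^{n}\tilde\varphi_k(a)z^k$. The hypothesis that $T$ does not increase the degree forces $T(a)$ to be a constant polynomial, hence $\tilde\varphi_k(a)=0$ for every $a\in\mathbb{C}$ and every $k\geq 1$, i.e.\ $\tilde\varphi_k\equiv 0$ for all $k\geq 1$. Next I would test on a linear polynomial $p(z)=z-z_0$; here $a=1$, $N=1$, the product $\prod_{i\neq j}(z-z_i)$ is empty, and since each $\tilde\varphi_k$ is a derivation we have $\tilde\varphi_k(1)=0$, so the trailing contribution disappears. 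The representation reduces to $T(z-z_0)(z)=\sum_{k=0}^{n}(\psi_k(-z_0)+\tilde\varphi_k(-z_0))z^k$, and the requirement $\deg T(z-z_0)\leq 1$, combined with the previous step, forces $\psi_k\equiv 0$ for $k\geq 2$.

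Substituting these vanishing conditions into Theorem~\ref{t1} produces exactly the formula claimed in Corollary~\ref{cor2}, with the function $\tilde\varphi_1$ appearing in the statement taken to be the zero derivation (which is still a derivation, so the statement is valid as written). The converse direction is a direct degree count: every summand in the stated formula has degree at most $N$ when $p$ has degree $N$. Finally, for the ``consequently'' clause, evaluating on $p(z)=z^N$ (with $a=1$ and $z_1=\cdots=z_N=0$) and using $\tilde\varphi_k(0)=\tilde\varphi_k(1)=0$, the trailing term vanishes, each $\prod_{i\neq j}(z-z_i)$ collapses to $z^{N-1}$, and the sum over $j$ contributes a factor of $N$, yielding $T(z^N)=N\psi_0(0)z^{N-1}+N\psi_1(0)z^N=\psi_0(0)\,p'+\psi_1(0)\,p$, so $c_p=\psi_0(0)$ and $d_p=\psi_1(0)$ work. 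The main obstacle is really just careful bookkeeping; the only point requiring attention is recognising that the empty product is $1$ and that the derivation identity $\tilde\varphi_k(1)=0$ is what eliminates the unwanted trailing factor when specialising to $p(z)=z-z_0$.
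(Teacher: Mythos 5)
Your argument is correct and follows essentially the same route as the paper's own (very terse) proof, which simply asserts $n=1$ and invokes the vanishing of Leibniz maps at $0$ and $1$; your version is in fact more careful, since testing on constants is what legitimately forces $\tilde{\varphi}_k\equiv 0$ for $k\ge 1$ and hence tames the trailing term. One harmless slip at the end: $N\psi_1(0)z^N = N\psi_1(0)\cdot p$, so $d_p = N\psi_1(0)$ rather than $\psi_1(0)$ (immaterial, since $c_p,d_p$ may depend on $p$).
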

\begin{proof}
Since $T$ does not increase the degree, then we have that $n=1$ in the statement of Theorem \ref{t1}. Using the fact that Leibniz mappings vanish at $0$ and $1$ we derive the latter part.
\end{proof}

\begin{ackn}
 The research of Eszter Gselmann has been supported by project no.~K134191 that has been implemented with the support provided by the National Research, Development and Innovation Fund of Hungary,
financed under the K\_20 funding scheme. 

The research has been accomplished during the stay of Włodzimierz Fechner at the University of Debrecen, covered by the Visegrad Fellowship \#62320104.
\end{ackn}

\bibliographystyle{plain}


\end{document}